\newcounter{Theorem}
\theoremstyle{plain}
\newtheorem{thm}[Theorem]{\bf Theorem}
\newtheorem{prop}[Theorem]{\bf Proposition}
\newtheorem{cor}[Theorem]{\bf Corollary}
\newtheorem{lemma}[Theorem]{\bf Lemma}
\theoremstyle{remark}
\newtheorem{remark}[Theorem]{\bf Remark}
\theoremstyle{definition}
\numberwithin{Theorem}{section}
\newcommand{\HH}{\mathcal{H}}
\newcommand{\R}{\mathbb{R}}
\newcommand{\N}{\mathbb{N}}
\newcommand{\leqs}{\leqslant}
\newcommand{\geqs}{\geqslant}
\newcommand{\ep}{\epsilon}
\newcommand{\ld }{\lambda}
\newcommand{\sm}{\,\sigma\,}
\newcommand{\norm}[1]{\lVert #1 \rVert}
\begin{document}

\thispagestyle{empty}

\setcounter{page}{1}% ignore this

\vspace*{-1.5cm}

%\noindent {\footnotesize{\textbf{Chamchuri Journal of Mathematics}}} \\
%\noindent {\footnotesize{\textsc{Volume 1(2009) Number 1, \pageref{1st-page}--\pageref{last-page}}}} \\
%\noindent {\footnotesize{\texttt{http://www.math.sc.chula.ac.th/cjm}}}\\

%\vspace{-2.4cm}
%
%\begin{figure}[h]
%\begin{flushright}
%\includegraphics[width=2.5cm,height=1.5cm]{chamchuri.eps}
%\end{flushright}
%\end{figure}

\label{1st-page}

\renewcommand{\thefootnote}{\fnsymbol{footnote}}

\begin{center}
{\Large{\textbf{The Normed Ordered Cone of Operator}}} \\[0.2cm]
{\Large{\textbf{Connections}}} \\[0.4cm]
{Pattrawut Chansangiam{\footnote{Corresponding author}},
        Wicharn Lewkeeratiyutkul{}} \\[0.2cm] 
%{\Large{and Name3 Surname3}} \\[0.2cm]
\end{center}

%\vspace{0.25cm}
%
%\begin{flushright}
%\begin{small}
%\textit{Received dd mm 20xx} \\
%\textit{Revised dd mm 20xx} \\
%\textit{Accepted dd mm 20xx} \\
%\end{small}
%\end{flushright}

\renewcommand{\thefootnote}{\arabic{footnote}}

\bigskip

\noindent
\textbf{Abstract:\ } 
A connection in Kubo-Ando sense is a binary operation for positive operators on a Hilbert space satisfying
the monotonicity, the transformer inequality and the continuity from above.
A mean is a connection $\sigma$ such that $A \sigma A =A$ for all positive operators $A$.
In this paper, we consider the interplay between the cone of connections,
the cone of operator monotone functions on $\R^+$ and the cone of finite Borel measures on $[0,\infty]$.
%We define a norm for a connection in such a way that the set of operator connections becomes
%a normed ordered cone.
%On the other hand, the cone of operator monotone functions on $\R^+$
%and the cone of finite Borel measures on $[0,\infty]$ are equipped with suitable norms.
The set of operator connections is shown to be isometrically order-isomorphic, as normed ordered cones,
to the set of operator monotone functions on $\R^+$.
This set is isometrically isomorphic, as normed cones,
to the set of finite Borel measures on $[0,\infty]$.
It follows that the convergences of the sequence of connections, the sequence of their representing functions
and the sequence of their representing measures are equivalent.
In addition, we obtain characterizations for a connection to be a mean.
In fact, a connection is a mean if and only if it has norm $1$.
    
\vspace{0.6cm}

\noindent
\textbf{Keywords:\ } operator connection, operator mean, operator monotone function  % using a comma in order to separate your list of keywords, no period

\medskip

\noindent
\textbf{2010 Mathematics Subject Classification:\ } 47A63, 47A64 % using a comma in order to separate the list, no period

\bigskip

\section{Introduction}

A general theory of operator means 
was given by Kubo and Ando \cite{Kubo-Ando}. 
Denote by $B(\HH)$ the von Neumann algebra of bounded linear operators on a complex Hilbert space $\HH$
and $B(\HH)^+$ its positive cone.
A \emph{connection} is a binary operation $\sm$ assigned to each pair of positive operators
such that for all $A,B,C,D \geqs 0$:
\begin{enumerate}
	\item[(M1)] \emph{monotonicity}: $A \leqs C, B \leqs D \implies A \sm B \leqs C \sm D$
	\item[(M2)] \emph{transformer inequality}: $C(A \sm B)C \leqs (CAC) \sm (CBC)$
	\item[(M3)] \emph{continuity from above}:  for $A_n,B_n \in B(\HH)^+$,
                if $A_n \downarrow A$ and $B_n \downarrow B$,
                 then $A_n \sm B_n \downarrow A \sm B$. 
                 Here, $A_n \downarrow A$ indicates that $A_n$ is a decreasing sequence
                 converging strongly to $A$.
\end{enumerate}
This definition is modeled from the notion of the parallel sum introduced in \cite{Anderson-Duffin}
for analzing multiport electrical networks.
A \emph{mean} is a connection $\sigma$ such that $A \sm A =A$ for all $A \geqs 0$ or, equivalently, $I \sm I = I$.
Here are examples of means in practical usage:  
	\begin{itemize}
		\item	arithmetic mean: $A \triangledown B = (A + B)/2$
		\item	geometric mean \cite{Ando78, Ando}: 
    			$A \# B =  A^{1/2} 
    			({A}^{-1/2} B  {A}^{-1/2})^{1/2} {A}^{1/2}$
		\item	harmonic mean: $A \,!\, B = 2(A^{-1} + B^{-1})^{-1}$ % for $A,B>0$.
		\item	logarithmic mean: $(A,B) \mapsto A^{1/2} f (A^{-1/2} B A^{-1/2}) A^{1/2}$ where $f(x) = (x-1)/\log x$.
	\end{itemize}

For connections $\sigma$ and $\eta$ on $B(\HH)^+$, we define
\begin{align*}
    \sigma + \eta &: B(\HH)^+ \times B(\HH)^+ \to B(\HH)^+ : (A,B) \mapsto (A \,\sigma\, B) + (A \,\eta\, B), \\
    k\sigma &: B(\HH)^+ \times B(\HH)^+ \to B(\HH)^+ : (A,B) \mapsto k(A \,\sigma\, B), \quad k \in \R^+.
\end{align*}
%The equality between two connections is defined pointwise.
Denote by $C(B(\HH)^+)$ the set of connections on $B(\HH)^+$.
Define a partial order $\leqs$ for connections on $B(\HH)^+$ by $\sigma_1 \leqs \sigma_2$ 
if $A \,\sigma_1\, B \leqs A \,\sigma_2\, B$ for all $A,B \in B(\HH)^+$. 
It is straightforward to show that the set $C(B(\HH)^+)$ is an ordered cone in which the neutral element is 
the zero connection $0: (A,B) \mapsto 0$. 
This cone is pointed (i.e. $\sigma \geqs 0$ for all $\sigma$) and order cancellative.

A major tool in Kubo-Ando theory of connections and means is the class of operator monotone functions. This concept was introduced in \cite{Lowner}; see more information in \cite{Bhatia,Donoghue,Hiai-Yanagi}. 
Recall that continuous real-valued function $f$ on an interval $I$ is called an \emph{operator monotone function} if
for all Hilbert spaces $\HH$ and for all Hermitian operators $A,B \in B(\HH)$ whose spectra are contained in $I$, 
we have
\begin{align*}
	A \leqs B \implies f(A) \leqs f(B).
\end{align*} 
%This concept was introduced in \cite{Lowner}; see also \cite{Bhatia,Hiai-Yanagi}.
Denote by $OM(\R^+)$ the set of operator monotone functions from $\R^+=[0,\infty)$ to itself.
This set is a cone under usual addition and scalar multiplication 
in which the zero function $0: x \mapsto 0$ is the neutral element.
The partial order on $OM(\R^+)$ is defined pointwise.
This cone becomes an ordered cone which is pointed and order cancellative.

%A connection $\sigma$ on $B(\HH)^+$ can be characterized (see \cite{Pattrawut-Wicharn, Kubo-Ando}) as follows:
A major result in Kubo-Ando theory is a one-to-one correspondence between connections on $B(\HH)^+$ and 
operator monotone functions on $\R^+$ as follows:

\begin{thm}[\cite{Kubo-Ando}] \label{thm: connection and operat mon func}
	Given a connection $\sigma$, there is a unique operator monotone function $f: \R^+ \to \R^+$ satisfying
				\begin{align*}
    			f(x)I = I \sm (xI), \quad x \in \R^+.
				\end{align*}
	Moreover, the map $\sigma \mapsto f$ is an affine order isomorphism.
	In addition, $\sigma$ is a mean if and only if $f(1)=1$.
\end{thm}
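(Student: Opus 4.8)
The plan is to show that a connection is completely determined by how it acts on scalar multiples of the identity, to read off $f$ from that action, and finally to identify $f$ with a Löwner representing function. The first step, \emph{extracting $f$}, is to record the unitary covariance of a connection: applying the transformer inequality (M2) to a unitary $U$ and then to $U^*$ forces the equality $U^*(A \sm B)U = (U^*AU)\sm(U^*BU)$. Taking $A=I$ and $B=xI$ gives $U^*(I \sm xI)U = I \sm xI$ for every unitary $U$, so $I \sm xI$ commutes with all unitaries and is therefore a scalar; this defines $f$ by $I \sm xI = f(x)I$ and makes $f$ unique. Monotonicity of $f$ as a scalar function is immediate from (M1), since $x \leqs y$ gives $xI \leqs yI$.

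The heart of the argument is the \emph{canonical formula}, obtained by upgrading the scalar identity to $I \sm T = f(T)$ for every $T \geqs 0$, with $f(T)$ interpreted by the functional calculus. Unitary covariance applied to every unitary commuting with $T$ shows $I \sm T$ lies in the abelian von Neumann algebra generated by $T$, so $I \sm T = g(T)$ for some Borel $g$, and it remains to see $g=f$ on the spectrum of $T$. For this I would prove a localization lemma — if $A$ and $B$ are block diagonal for a decomposition $\HH = \HH_1 \oplus \HH_2$, then $A \sm B$ is block diagonal and splits as the direct sum of the connection computed on each summand — which follows from (M2) with $C$ the two coordinate projections together with (M1). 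Applied to a $T$ with finite spectrum, $T = \sum_i \ld_i P_i$, this reduces each block to the scalar case and yields $I \sm T = \sum_i f(\ld_i)P_i = f(T)$; the continuity from above (M3) then removes the finiteness assumption by approximating $T$ from above by operators with finite spectrum. With $I \sm T = f(T)$ secured, congruence by $C=A^{1/2}$ and by $C=A^{-1/2}$ in (M2) (each inequality paired with its reverse) produces $A \sm B = A^{1/2} f(A^{-1/2} B A^{-1/2}) A^{1/2}$ for invertible $A$, and (M3) extends it to all $A \geqs 0$ through $A + \ep I \downarrow A$.

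\emph{Operator monotonicity and the isomorphism} now follow quickly. For $S \leqs T$ in $B(\HH)^+$, (M1) gives $I \sm S \leqs I \sm T$, that is $f(S) \leqs f(T)$; taking $\HH$ infinite dimensional this holds in every matrix order and hence is exactly operator monotonicity, while continuity of $f$ on $(0,\infty)$ is the automatic regularity of operator monotone functions and right continuity at $0$ comes from (M3). That $\sigma \mapsto f$ is affine is the one-line computation $I\,(\sigma+\eta)\,(xI) = (I \sm xI) + (I\,\eta\,(xI))$, giving $f_{\sigma+\eta} = f_\sigma + f_\eta$, with homogeneity analogous. Order preservation is read off the defining relation, and order reflection follows because $f_1 \leqs f_2$ pointwise implies $f_1(T) \leqs f_2(T)$ in the functional calculus, hence $A\,\sigma_1\,B \leqs A\,\sigma_2\,B$ via the canonical formula. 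The mean characterization is then immediate: $\sigma$ is a mean iff $I \sm I = I$ iff $f(1)I = I$ iff $f(1)=1$.

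The genuinely hard part is \emph{surjectivity}: given operator monotone $f:\R^+ \to \R^+$, I must manufacture a connection inducing it. I would define $\sm$ by the canonical formula on invertible $A$ and extend by $A \sm B = \lim_{\ep \downarrow 0}(A+\ep I)\sm B$, and the main obstacle is verifying (M1)--(M3) for this operation, none of which is transparent from the pointwise behaviour of $f$. The decisive tool is the Löwner integral representation, which exhibits $f$ as a nonnegative combination of the elementary functions $x \mapsto (1+t)x/(x+t)$; each of these is realized by an explicit weighted harmonic connection satisfying the axioms, and monotonicity, the transformer inequality and continuity from above are all preserved under the nonnegative superposition and limits encoded by the representing measure. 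Carrying out these verifications, and confirming that the extension to non-invertible $A$ remains well defined and upper semicontinuous, is where the real analytic effort lies.
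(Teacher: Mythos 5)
The paper does not actually prove this theorem; it imports it verbatim from Kubo--Ando with a citation, so your attempt can only be measured against the original argument. Your overall architecture (read $f$ off the scalar action, upgrade to $I \sm T = f(T)$, get the canonical formula $A \sm B = A^{1/2} f(A^{-1/2}BA^{-1/2})A^{1/2}$, and use L\"{o}wner's integral representation plus weighted harmonic means for surjectivity) is indeed the architecture of that proof. But there is a genuine gap at your very first step: you apply the transformer inequality (M2) with $C$ a unitary $U$. As stated in this paper, (M2) is an axiom only for $C \geqs 0$, and even in Kubo--Ando's original formulation $C$ must be Hermitian; a general unitary is neither, so the equality $U^*(A \sm B)U = (U^*AU)\sm(U^*BU)$ is simply not available at this stage. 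That equality is true, but in the Kubo--Ando theory the transformer \emph{equality} for invertible (in particular unitary) congruences is a consequence of the integral representation, i.e.\ of the very theorem you are proving, so invoking it as the opening move is circular. The same unlicensed move underlies your later claim that $I \sm T$ lies in the von Neumann algebra generated by $T$ (``unitary covariance applied to every unitary commuting with $T$'').

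The standard elementary substitute, which also repairs the second (smaller) gap in your localization lemma, is a projection-perturbation argument. If $P$ is a projection commuting with $A$ and $B$, put $Q = I - P$ and $C_\ep = P + \ep Q$, which \emph{is} positive and invertible; applying (M2) with $C_\ep$ and with $C_\ep^{-1}$ gives the equality $C_\ep (A \sm B) C_\ep = (C_\ep A C_\ep)\sm(C_\ep B C_\ep)$, and letting $\ep \downarrow 0$ with (M3) identifies the diagonal corner $P(A\sm B)P = (AP)\sm(BP)$; moreover, writing $X = A \sm B$, the monotone convergence gives $(PXQ + QXP) + \ep\, QXQ \geqs 0$ for all $\ep > 0$, hence $S := PXQ + QXP \geqs 0$, and since $PSP = QSQ = 0$ positivity forces $S = 0$, so the off-diagonal corners vanish. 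Taking $A = I$, $B = xI$ (which commute with \emph{every} projection) shows $I \sm (xI)$ commutes with every projection and is therefore scalar --- no unitaries needed. Note that your justification of the localization lemma (``(M2) with the two coordinate projections together with (M1)'') only identifies the diagonal blocks and cannot by itself exclude the off-diagonal ones; the $C_\ep$ device and (M3) are genuinely needed there. With these two repairs, the remainder of your outline --- finite-spectrum approximation, the canonical formula, affinity, order isomorphism, the criterion $f(1)=1$, and surjectivity via L\"{o}wner's representation (which you correctly flag as the substantial analytic part, though you leave it unexecuted) --- goes through and coincides with Kubo--Ando's own proof.
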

%In fact, the functions $f$ in \eqref{eq: f(t)I = I sm tI} and \eqref{eq: A sm B in term of f} are unique and the same.
We call $f$ the \emph{representing function} of $\sigma$.
There is also a one-to-one correspondence between connections and finite Borel measures on $[0,\infty]$
given by the following integral representation:

\begin{thm}[\cite{Kubo-Ando}] \label{thm: connection and Borel measure on [0,infty]}
	Given a connection $\sigma$, there is a unique finite Borel measure $\mu$ on $[0,\infty]$ such that
				\begin{align}
    			A \sm B = \int_{[0,\infty]} \frac{\ld+1}{2 \ld} (\ld A \,!\,B)\, d\mu(\ld), \quad A,B \geqs 0.
    			\label{eq: formula of connection}
				\end{align}
			Moreover, the map $\sigma \mapsto \mu$ is an affine isomorphism.
			In addition, $\sigma$ is a mean if and only if $\mu([0,\infty]) =1$.
\end{thm}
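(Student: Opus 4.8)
The plan is to deduce everything from the one-to-one correspondence $\sigma \leftrightarrow f$ of Theorem~\ref{thm: connection and operat mon func}, so that the problem reduces to representing each operator monotone function $f \in OM(\R^+)$ by a finite Borel measure on $[0,\infty]$. The crucial analytic input is the classical integral representation from L\"owner theory (see \cite{Lowner,Bhatia,Donoghue,Hiai-Yanagi}): every $f \in OM(\R^+)$ can be written uniquely as
\begin{align}
	f(x) = \int_{[0,\infty]} \frac{(\ld+1)x}{\ld+x}\, d\mu(\ld), \quad x \in \R^+,
	\label{eq: fn rep}
\end{align}
for a finite Borel measure $\mu$ on $[0,\infty]$, where the integrand is read as the constant $1$ at $\ld = 0$ and as $x$ at $\ld = \infty$. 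The point is that the normalized weighted harmonic means $h_\ld(x) = (\ld+1)x/(\ld+x)$ are exactly the extreme rays of the cone $OM(\R^+)$, so \eqref{eq: fn rep} is a Choquet-type representation of $f$ over these extreme functions, with the masses at the endpoints $\ld=0,\infty$ encoding the constant and linear parts.

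Granting \eqref{eq: fn rep}, I would establish the operator formula \eqref{eq: formula of connection} as follows. For each fixed $\ld$, the map $(A,B) \mapsto \frac{\ld+1}{2\ld}(\ld A \hm B)$ is itself a connection, being a positive scalar multiple of a scaling of the harmonic mean; hence the integral
\begin{align*}
	\tau(A,B) = \int_{[0,\infty]} \frac{\ld+1}{2\ld}\,(\ld A \hm B)\, d\mu(\ld)
\end{align*}
defines a map $\tau : B(\HH)^+ \times B(\HH)^+ \to B(\HH)^+$ which inherits the monotonicity (M1), the transformer inequality (M2) and the continuity from above (M3) from the integrand, since these properties are preserved under integration against the finite positive measure $\mu$, with (M3) handled by a monotone/dominated convergence argument in the strong operator topology. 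Thus $\tau$ is a connection. A direct computation gives $\ld I \hm xI = \frac{2\ld x}{\ld+x}I$, whence $\frac{\ld+1}{2\ld}(\ld I \hm xI) = h_\ld(x)\,I$ and therefore $I \,\tau\, (xI) = f(x)\,I$ by \eqref{eq: fn rep}. So $\tau$ and $\sigma$ share the representing function $f$, and the uniqueness half of Theorem~\ref{thm: connection and operat mon func} forces $\tau = \sigma$, which is exactly \eqref{eq: formula of connection}.

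For uniqueness of $\mu$ and the affine property, I would argue that the map $f \mapsto \mu$ of \eqref{eq: fn rep} is a well-defined bijection: uniqueness of $\mu$ follows from the injectivity built into the L\"owner--Nevanlinna representation (recoverable by a Stieltjes inversion applied to the Pick extension of $f$), and it is clearly additive and positively homogeneous because the right-hand side of \eqref{eq: fn rep} is linear in $\mu$ and by uniqueness. Composing with the affine order isomorphism $\sigma \mapsto f$ of Theorem~\ref{thm: connection and operat mon func} shows that $\sigma \mapsto \mu$ is an affine isomorphism of cones. Finally, the mean characterization is immediate from the representation: since $h_\ld(1) = (\ld+1)/(\ld+1) = 1$ for every $\ld \in [0,\infty]$, evaluating \eqref{eq: fn rep} at $x=1$ yields $f(1) = \int_{[0,\infty]} 1\, d\mu(\ld) = \mu([0,\infty])$; combined with the criterion that $\sigma$ is a mean if and only if $f(1)=1$, this gives that $\sigma$ is a mean if and only if $\mu([0,\infty]) = 1$.

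The main obstacle is the integral representation \eqref{eq: fn rep} together with the uniqueness of $\mu$: proving it from scratch requires the full strength of L\"owner's theorem, namely the analytic continuation of $f$ to a Pick function on the upper half-plane and the Nevanlinna representation, and care is needed in handling the boundary masses at $\ld = 0$ and $\ld = \infty$ so that $\mu$ lives on the compactification $[0,\infty]$ and is finite. Once \eqref{eq: fn rep} is in hand, the transfer to connections via Theorem~\ref{thm: connection and operat mon func} and the verification that integration preserves (M1)--(M3) are the only remaining points requiring attention, the latter being routine modulo the strong-operator-topology convergence in (M3).
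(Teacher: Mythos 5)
Your proposal is correct, and it is essentially the standard argument: the paper itself gives no proof of this theorem (it is quoted from Kubo--Ando), and the proof there proceeds exactly as you do, by combining the L\"owner-type integral representation $f(x) = \int_{[0,\infty]} \frac{(\ld+1)x}{\ld+x}\, d\mu(\ld)$ of operator monotone functions with the correspondence $\sigma \leftrightarrow f$ of Theorem~\ref{thm: connection and operat mon func}, checking that the integral formula defines a connection whose representing function is $f$. Your handling of the endpoints $\ld = 0, \infty$ and of (M1)--(M3) under the integral (e.g.\ via the bound $\frac{\ld+1}{2\ld}(\ld A \hm B) \leqs 2(A+B)$ and dominated convergence on quadratic forms) is the right way to make the routine steps rigorous.
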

The measure $\mu$ in this theorem is called the \emph{representing measure} of $\sigma$.
Here, the cone of finite Borel measures on $[0,\infty]$, denoted by $BM([0,\infty])$,
is equipped with the usual algebraic operations and pointwise order.
The cone $BM([0,\infty])$ is an ordered cone in which the zero measure 
is the neutral element.

In this paper, we investigate structures of the cone of connections in relation with the cone 
of operator monotone functions on $\R^+$ and the cone of finite Borel measures on $[0,\infty]$.
We define a norm for a connection in such a way that the set of operator connections becomes
a normed ordered cone.
On the other hand, the cone of operator monotone functions on $\R^+$
and the cone of finite Borel measures on $[0,\infty]$ are equipped with suitable norms.
The set of operator connections is shown to be isometrically order-isomorphic, as normed ordered cones,
to the set of operator monotone functions on $\R^+$ via the map sending a connection to its representing function.
This set is isometrically isomorphic, as normed cones,
to the set of finite Borel measures on $[0,\infty]$ via the map sending a connection to its representing measure.
It follows that the convergences of the sequence of connections, the sequence of their representing functions
and the sequence of their representing measures are equivalent.
In addition, we obtain characterizations for a connection to be a mean.
In fact, a connection is a mean if and only if it has norm $1$.

\section{Norms for connections, operator monotone functions and Borel measures}

In this section, we consider topological structures of the cone of connections, 
the cone of operator monotone functions on $\R^+$
and the cone of finite Borel measures on $[0,\infty]$.
In fact, there are norms equipped naturally on these cones.

    Recall that a \emph{normed cone} is a cone $C$ equipped with a function $\norm{\cdot} : C \to \R^+$ such that
    for each $x,y \in C$ and $k \in \R^+$,
    \begin{enumerate}
        \item[(i)]   $\norm{x}=0 \implies x=0$,
        \item[(ii)]   $\norm{kx} = k \norm{x}$,
        \item[(iii)]   $\norm{x+y} \leqs \norm{x}+ \norm{y}$.
    \end{enumerate}
    A \emph{normed ordered cone} is an ordered cone $(C, \leqs)$ which is also a normed cone such that
    for each $x,y \in C$, $x \leqs y \implies \norm{x} \leqs \norm{y}$.

We define a function $\norm{\cdot}: C(B(\HH)^+) \to \R^+$ by 
\begin{align*}
	\norm{\sigma} = \sup\left\{ \norm{A \sm B}: A,B \geqs 0, \norm{A}=\norm{B}=1 \right\}
\end{align*}
for each connection $\sigma$. 

Recall that each connection $\sigma$ on $B(\HH)^+$ induces a unique connection $\tilde{\sigma}$
on $\R^+$ such that
\begin{align*}
	(xI) \,\sigma\, (yI) = (x \,\tilde{\sigma}\, y)I, \quad x,y \in \R^+.
\end{align*} 
A connection and its induced connection may be written by the same notation.

\begin{lemma}(\cite{Arlinskii}) \label{lem: norm of A sm B}
	For each connection $\sigma$, we have
	$
		\norm{A \sm B} \leqs \norm{A} \sm\norm{B}
	$
	for all $A,B \geqs 0$.
\end{lemma}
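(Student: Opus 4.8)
The plan is to reduce the operator inequality to a scalar evaluation of the induced connection, by dominating $A$ and $B$ from above by multiples of the identity and then invoking monotonicity. The whole argument rests on the order axiom (M1) together with the spectral description of the norm of a positive operator.

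First I would record the elementary bound that every positive operator $X$ satisfies $0 \leqs X \leqs \norm{X} I$, which holds because the operator norm of a positive operator coincides with the supremum of its spectrum. Applying this to the two given operators yields $A \leqs \norm{A} I$ and $B \leqs \norm{B} I$.

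Next, by the monotonicity axiom (M1) these two inequalities combine to give
\begin{align*}
	A \sm B \leqs (\norm{A} I) \sm (\norm{B} I).
\end{align*}
The right-hand side is precisely the induced connection on $\R^+$ evaluated at the scalars $\norm{A}$ and $\norm{B}$: by the defining relation of $\tilde{\sigma}$ recalled above, $(\norm{A} I) \sm (\norm{B} I) = (\norm{A} \sm \norm{B}) I$, where $\norm{A} \sm \norm{B} \geqs 0$ is the scalar produced by the induced connection. Together with the fact that a connection takes values in $B(\HH)^+$, this gives the two-sided estimate $0 \leqs A \sm B \leqs (\norm{A} \sm \norm{B}) I$.

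Finally I would pass to norms using the companion fact that for a positive operator $Y$ with $0 \leqs Y \leqs c I$ one has $\norm{Y} \leqs c$, again because $\norm{Y}$ equals the spectral radius and the spectrum of $Y$ lies in $[0,c]$. Taking $Y = A \sm B$ and $c = \norm{A} \sm \norm{B}$ delivers $\norm{A \sm B} \leqs \norm{A} \sm \norm{B}$, which is the claim. The argument is short and I do not expect a genuine obstacle; the only points needing care are the two order-to-norm implications for positive operators (that $X \leqs \norm{X} I$ and that $0 \leqs Y \leqs cI$ forces $\norm{Y} \leqs c$), both immediate from the spectral characterization of the norm, and the bookkeeping that the induced scalar $\norm{A} \sm \norm{B}$ is exactly the quantity on the right-hand side of the asserted inequality.
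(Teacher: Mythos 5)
Your proof is correct. Note that the paper itself gives no proof of this lemma --- it is quoted from the reference \cite{Arlinskii} --- so there is no in-paper argument to compare against; your derivation is the natural self-contained one. Each step checks out: $A \leqs \norm{A}I$ and $B \leqs \norm{B}I$ by the spectral characterization of the norm of a positive operator, axiom (M1) then gives $A \sm B \leqs (\norm{A}I) \sm (\norm{B}I)$, the identification $(\norm{A}I)\sm(\norm{B}I) = (\norm{A}\sm\norm{B})I$ is exactly the defining property of the induced connection on $\R^+$ that the paper recalls immediately before the lemma, and $0 \leqs Y \leqs cI \implies \norm{Y} \leqs c$ finishes it. This is precisely the meaning of the right-hand side $\norm{A}\sm\norm{B}$ in the statement, so your bookkeeping is also correct.
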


\begin{prop} \label{prop: def of norm of sm}
	For each connection $\sigma$, we have 
	\begin{align*}
		\norm{\sigma}
			\:&=\: \sup \left\{ \frac{\norm{A \sm A}}{\norm{A}}: A >0 \right\} \\
			\:&=\: \sup \left\{ \norm{A \sm A}: A\geqs 0, \norm{A} = 1 \right \} \\	
			\:&=\: \frac{\norm{ A \sm A}}{ \norm{A} } \quad \text{ for any } A>0 \\
			\:&=\: \norm{I \sigma I}.  
	\end{align*}
\end{prop}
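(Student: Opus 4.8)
The plan is to show that each of the four quantities in the statement equals the single scalar $c := 1 \sm 1$, where we use the induced connection on $\R^+$; equivalently $I \sm I = (1 \sm 1)I = cI$, and (assuming $\HH \neq \{0\}$, so that $\norm{I}=1$) $\norm{I \sm I} = c$. Everything reduces to one key identity, namely
$A \sm A = cA$ for every $A \geqs 0$, after which the four equalities follow by routine bookkeeping together with Lemma~\ref{lem: norm of A sm B}.

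The heart of the argument is establishing $A \sm A = cA$, and I would prove it first for invertible $A>0$ using the transformer inequality (M2) twice. Applying (M2) to the pair $(I,I)$ with $C = A^{1/2}$ gives $A^{1/2}(I \sm I)A^{1/2} \leqs A \sm A$, that is $cA \leqs A \sm A$. Applying (M2) to the pair $(A,A)$ with $C = A^{-1/2}$ gives $A^{-1/2}(A \sm A)A^{-1/2} \leqs (A^{-1/2}AA^{-1/2}) \sm (A^{-1/2}AA^{-1/2}) = I \sm I = cI$, and conjugating by $A^{1/2}$ (which preserves the order) yields $A \sm A \leqs cA$. Combining the two inequalities gives $A \sm A = cA$ for all $A>0$. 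To extend to an arbitrary $A \geqs 0$, I would put $A_n = A + \tfrac1n I > 0$, so that $A_n \downarrow A$ and $A_n \sm A_n = cA_n \downarrow cA$ strongly; by the continuity from above (M3) we also have $A_n \sm A_n \downarrow A \sm A$, and uniqueness of the strong limit forces $A \sm A = cA$.

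With the identity $A \sm A = cA$ in hand, the last three equalities are immediate. For any $A>0$ we have $\norm{A \sm A}/\norm{A} = c$, a value independent of $A$; hence it equals both the supremum over all $A>0$ and, restricting to $\norm{A}=1$, the supremum $\sup\{\norm{A \sm A} : A \geqs 0,\ \norm{A}=1\}$, since each such term is $c\norm{A}=c$. Likewise $\norm{I \sm I} = \norm{cI} = c$. It then remains to identify the defining norm $\norm{\sigma}$ with $c$. For the upper bound I would invoke Lemma~\ref{lem: norm of A sm B}: whenever $\norm{A}=\norm{B}=1$ we get $\norm{A \sm B} \leqs \norm{A} \sm \norm{B} = 1 \sm 1 = c$, so $\norm{\sigma} \leqs c$. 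For the matching lower bound, the pair $A = B = I$ is admissible and gives $\norm{I \sm I} = c$, whence $\norm{\sigma} \geqs c$. Therefore $\norm{\sigma} = c$, and the full chain of equalities is established.

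The only genuinely delicate point is the transformer-inequality computation of $A \sm A$ for invertible $A$, where the two applications of (M2) must be combined in opposite directions; the passage to non-invertible $A$ is the sole place where (M3) is needed, and the remaining identifications are formal once $A \sm A = cA$ is available.
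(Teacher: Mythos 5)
Your proof is correct, but it takes a genuinely different route from the paper's, and in fact a more robust one. The paper argues directly at the level of norms: it obtains $\norm{\sigma} \leqs \norm{I \sm I}$ from Lemma~\ref{lem: norm of A sm B} exactly as you do, gets $\norm{\sigma} \geqs \norm{A \sm A}/\norm{A}$ by normalizing $A$ to $A/\norm{A}$, and then tries to close the loop by writing $I \sm I = A^{-1/2}(A \sm A)A^{-1/2}$ and bounding this by submultiplicativity, $\norm{I \sm I} \leqs \norm{A^{-1/2}}\,\norm{A \sm A}\,\norm{A^{-1/2}}$, followed by the identification $\norm{A^{-1/2}} = \norm{A}^{-1/2}$. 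That identification is false in general ($\norm{A^{-1/2}}$ is the reciprocal square root of the \emph{bottom} of the spectrum of $A$, not of $\norm{A}$; e.g.\ for $A$ with spectrum $\{1,4\}$ one has $\norm{A^{-1/2}} = 1 \neq 1/2$), so the paper's proof of the equality $\norm{\sigma} = \norm{A \sm A}/\norm{A}$ has a gap at that step. Your approach avoids norm estimates altogether by first proving the operator identity $A \sm A = (1 \sm 1)A$ --- for invertible $A$ via two opposite applications of (M2) (equivalently, equality in the transformer inequality for invertible congruences), then for all $A \geqs 0$ via (M3) --- after which each of the four quantities is computed exactly rather than merely bounded. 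This buys you a clean repair of the paper's faulty step, and it also handles the supremum over non-invertible $A \geqs 0$ with $\norm{A}=1$ explicitly, which the paper leaves implicit.
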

\begin{proof}
	Clearly, $\norm{\sigma} \geqs \norm{I \sigma I}$.
	For each $A,B \geqs 0$ with $\norm{A} = \norm{B} = 1$, it follows from 
	Lemma \ref{lem: norm of A sm B} that
	\begin{align*}
		\norm{A \sm B} \leqs \norm{A} \sm \norm{B} 
		= 1 \,\sigma\, 1 = \norm{(1 \,\sigma\, 1) I} = \norm{I \sm I}.
	\end{align*}
	Hence, $\norm{\sigma} \leqs \norm{I \sm I}$.
	For each $A>0$, we have $\displaystyle \norm{\frac{1}{\norm{A}} A} =1 $ and hence
	\begin{align*}
		\norm{\sigma}
		\geqs \norm{ \frac{1}{\norm{A}}A \sm \frac{1}{\norm{A}}A } 
		= \norm{\frac{1}{\norm{A}} (A \sm A)} 
		= \frac{\norm{A \sm A}}{\norm{A}}  
	\end{align*}
	On the other hand, for $A>0$ we have
	\begin{align*}
		\norm{I \sm I} 
		&= \norm{ A^{-1/2} (A \sm A) A^{-1/2} }  \\
		&\leqs \norm{A^{-1/2}} \norm{A \sm A} \norm{A^{-1/2}} \\
		&= \norm{A}^{-1/2} \norm{A \sm A} \norm{A}^{-1/2}.
	\end{align*}
	Thus, $\norm{\sigma} = \norm{A \sm A}/\norm{A}$ for any $A>0$.
\end{proof}

\begin{prop}
    The pair $\left(C(B(\HH)^+), \norm{\cdot}\right)$ is a normed ordered cone.
\end{prop}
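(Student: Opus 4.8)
The plan is to verify the three defining axioms of a normed cone together with the order-compatibility condition, in each case reducing the statement to a property of the single operator $I \sm I$. The engine of the whole argument is Proposition \ref{prop: def of norm of sm}, which gives $\norm{\sigma} = \norm{I \sm I}$. Writing $f$ for the representing function of $\sigma$ from Theorem \ref{thm: connection and operat mon func}, the defining relation $f(x)I = I \sm (xI)$ at $x=1$ yields $I \sm I = f(1)I$, so that $\norm{\sigma} = f(1)$. This scalar reformulation makes homogeneity, subadditivity, and monotonicity of the norm essentially immediate, and isolates the only substantive point, positive-definiteness.

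For homogeneity, I would fix $k \in \R^+$ and use $I(k\sigma)I = k(I \sm I)$, a nonnegative multiple of $I$, to compute $\norm{k\sigma} = \norm{k(I \sm I)} = k\norm{I \sm I} = k\norm{\sigma}$. For subadditivity, let $\eta$ be a second connection with representing function $g$; since $I \sm I = f(1)I$ and $I\,\eta\,I = g(1)I$ are both nonnegative scalar multiples of $I$, I have $I(\sigma+\eta)I = (f(1)+g(1))I$, whence $\norm{\sigma+\eta} = f(1)+g(1) = \norm{\sigma}+\norm{\eta}$; thus the norm is in fact additive and the triangle inequality holds with equality. For order-compatibility, if $\sigma \leqs \eta$ then in particular $I \sm I \leqs I\,\eta\,I$, and since both are positive operators while the operator norm is monotone on $B(\HH)^+$, I obtain $\norm{\sigma} = \norm{I \sm I} \leqs \norm{I\,\eta\,I} = \norm{\eta}$.

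The one step with genuine content is positive-definiteness: $\norm{\sigma}=0 \implies \sigma = 0$. From $\norm{\sigma} = \norm{I \sm I} = 0$ I get $I \sm I = 0$, i.e. $f(1)=0$. The cleanest way to finish is through the representing measure $\mu$ of $\sigma$ from Theorem \ref{thm: connection and Borel measure on [0,infty]}: evaluating the integral formula \eqref{eq: formula of connection} at $A=B=I$, one computes $\lambda I \,!\, I = \tfrac{2\lambda}{1+\lambda}I$, so the integrand equals $I$ identically and $I \sm I = \mu([0,\infty])I$. Hence $\norm{\sigma} = \mu([0,\infty])$, and $\norm{\sigma}=0$ forces the nonnegative finite measure $\mu$ to have total mass zero, so $\mu = 0$; the isomorphism of Theorem \ref{thm: connection and Borel measure on [0,infty]} then gives $\sigma = 0$. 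Alternatively one may argue purely with $f$: operator monotone functions on $\R^+$ are increasing and concave, so $f(1)=0$ forces $f\equiv 0$ on $[0,1]$ by monotonicity and $f\equiv 0$ on $[1,\infty)$ by concavity through the points $0$ and $x$, after which Theorem \ref{thm: connection and operat mon func} gives $\sigma=0$. I expect this definiteness step to be the only place where a genuine structural property of connections (nonnegativity of $\mu$, or concavity of $f$) is used; every other part is formal manipulation of the identity $\norm{\sigma}=f(1)$.
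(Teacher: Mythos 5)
Your proof is correct, but it takes a genuinely different route from the paper on the one substantive point, positive-definiteness. The formal parts coincide: like the paper, you reduce everything to $\norm{\sigma}=\norm{I \,\sigma\, I}$ via Proposition \ref{prop: def of norm of sm}, and your order-compatibility argument is identical; you additionally observe that the norm is in fact \emph{additive} (since $I\,\sigma\, I$ and $I\,\eta\, I$ are nonnegative multiples of $I$), a fact the paper only records later, as a corollary of the isometric isomorphism theorem. The real divergence is in proving $\norm{\sigma}=0 \implies \sigma=0$. The paper gives a self-contained argument straight from the axioms (M1)--(M3): from $I\,\sigma\, I=0$ it deduces $I\,\sigma\, P=0$ for projections $P$, then $I \,\sigma\, (xI)=0$ for all $x\geqs 0$ by homogeneity, then $I\,\sigma\, A=0$ for $A$ with finite spectral resolution, then for all $A\geqs 0$ by continuity from above, and finally $A\,\sigma\, B=\lim_{\ep\downarrow 0}A_\ep^{1/2}\bigl(I\,\sigma\, A_\ep^{-1/2}BA_\ep^{-1/2}\bigr)A_\ep^{1/2}=0$ using congruence invariance. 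You instead invoke the Kubo--Ando representation theorems: either $I\,\sigma\, I=\mu([0,\infty])I$ forces $\mu=0$ and hence $\sigma=0$ by the bijectivity in Theorem \ref{thm: connection and Borel measure on [0,infty]}, or $f(1)=0$ forces $f\equiv 0$ by monotonicity and concavity and hence $\sigma=0$ by Theorem \ref{thm: connection and operat mon func}. Both routes are legitimate --- there is no circularity, since Theorems \ref{thm: connection and operat mon func} and \ref{thm: connection and Borel measure on [0,infty]} are quoted from Kubo--Ando and do not depend on the norm --- and yours is considerably shorter; your $f$-based alternative is essentially the same concavity argument the paper uses for the definiteness of the norm on $OM(\R^+)$ in Proposition \ref{prop: OM(R+) is ordered normed cone and norm is linear}. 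What the paper's longer argument buys is independence from the representation machinery: it shows the normed ordered cone structure of $C(B(\HH)^+)$ is intrinsic to the connection axioms, keeping this proposition logically prior to the isomorphism theory developed in Section 3.
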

\begin{proof}
	For each $\sigma, \eta \in C(B(\HH)^+)$ and $k \in \R^+$, by Proposition \ref{prop: def of norm of sm} we have 
	\begin{align*}
		\norm{k \sigma} &= \norm{I \,(k\sigma)\, I} = \norm{k(I \sigma I)} = k \norm{I \sigma I} = k \norm{\sigma}, \\
		\norm{\sigma + \eta} &= \norm{I \,(\sigma+\eta)\, I} = \norm{(I \sigma I) + (I \eta I)} 
			\leqs \norm{I \sigma I} + \norm{I \eta I} = \norm{\sigma} + \norm{\eta}.
	\end{align*}
	Suppose now that $\norm{\sigma} = 0$, i.e. $I \sigma I = 0$. 
	For each projection $P$, we have $P \leqs I$ and hence $I \sigma P \leqs I \sigma I =0$, i.e. $I \sigma P = 0$.
	Similarly, $(xI) \sigma I = 0$ for each $x \in [0,1]$.
	Then for each $x>1$, 
	\begin{align*}
		I \sigma (xI) = x\left( \frac{1}{x}I \,\sigma\, I \right) = 0.
	\end{align*}
	Consider $A \in B(\HH)^+$ in the form $A = \sum_{i=1}^m \ld_i P_i$ where $\ld_i>0$ and $P_i$'s are projections such that
	$P_i P_j = 0$ for $i \neq j$ and $\sum_{i=1}^m P_i = I$.
	We have
	\begin{align*}
		I \sm A = \sum_{i=1}^m (I \sm A)P_i = \sum P_i \sm AP_i 
			= \sum P_i \sm \ld_i P_i = \sum P_i (I \sm \ld_i I) = 0.
	\end{align*}
	For general $A \in B(\HH)^+$, let $\{A_n\}$ be a sequence of invertible positive operators
	such that $A_n \downarrow A$.
	Then $I \sm A = \lim_{n \to \infty} I \sm A_n = 0$ for all $A \geqs 0$.
	Hence, for $A,B \in B(\HH)^+$,
	\begin{align*}
		A \sm B = \lim_{ \ep \downarrow 0} A_{\ep} \sm B 
			= \lim_{\ep \downarrow 0} A_{\ep}^{1/2} (I \sm A_{\ep}^{-1/2} B A_{\ep}^{-1/2}) A_{\ep}^{1/2} = 0,
	\end{align*}
	here $A_{\ep} \equiv A+\ep I$. Thus $\sigma = 0$.
	
	If $\sigma \leqs \eta$, then $\norm{\sigma} = \norm{I \sm I} \leqs \norm{I \,\eta\, I} = \norm{\eta}$ since
	$I \sm I \leqs I \,\eta\, I$.
\end{proof}

Recall that a function $f$ from a cone $C$ into a cone $D$ is called \emph{linear} or \emph{affine} if 
$f(rx + sy) = r f(x) + s f(y)$
for each $x,y \in C$ and $r,s \in \R^+$.
%\begin{align*}
%    f(rx + sy) = r f(x) + s f(y).
%\end{align*}

Define a function $\norm{\cdot}: OM(\R^+) \to \R^+$ by 
$
	\norm{f} %&= \sup\left\{ \abs{f(x)}: x \geqs 0, \abs{x}=1 \right\} \\
		= f(1)
$
for each $f \in OM(\R^+)$. 
%Assign to each $f \in OM(\R^+)$ the nonnegative reals $\norm{f} = f(1)$.
\begin{prop} \label{prop: OM(R+) is ordered normed cone and norm is linear}
	The pair $(OM(\R^+), \norm{\cdot})$ is a normed ordered cone. Moreover, the function $\norm{\cdot}$ is linear.
\end{prop}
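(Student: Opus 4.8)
The plan is to check the three normed-cone axioms (i)--(iii), the order-compatibility condition, and linearity, and to observe that every requirement except the nondegeneracy axiom (i) is an immediate consequence of the defining formula $\norm{f} = f(1)$.

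First I would dispose of the formal verifications. Since addition and scalar multiplication on $OM(\R^+)$ are defined pointwise, evaluation at the point $1$ is additive and positively homogeneous: for $f,g \in OM(\R^+)$ and $r,s \in \R^+$,
\[
	\norm{rf+sg} = (rf+sg)(1) = r f(1) + s g(1) = r\norm{f} + s\norm{g}.
\]
This single identity simultaneously gives linearity, homogeneity (ii), and the triangle inequality (iii) (the latter in fact with equality). Likewise, if $f \leqs g$ pointwise then in particular $f(1) \leqs g(1)$, that is $\norm{f} \leqs \norm{g}$, so $\norm{\cdot}$ is order-preserving. None of these steps should present any difficulty.

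The substantive point, and the step I expect to be the main obstacle, is the nondegeneracy axiom (i): I must show that an operator monotone $f : \R^+ \to \R^+$ with $f(1)=0$ vanishes identically. Here plain monotonicity is not enough, since it controls $f$ only on $[0,1]$. My plan has two parts. (a) An operator monotone function is in particular ordinarily nondecreasing (apply the defining inequality to scalar multiples of $I$); since $f \geqs 0$ and $f(1)=0$, monotonicity forces $0 \leqs f(x) \leqs f(1) = 0$, hence $f(x)=0$ for every $x \in [0,1]$. (b) For $x>1$ I would invoke the concavity of operator monotone functions on $\R^+$ (a standard fact; see \cite{Bhatia,Donoghue}): applying the concavity inequality to the endpoints $0$ and $x$ at the interior point $1 = (1-\tfrac1x)\cdot 0 + \tfrac1x \cdot x$ gives
\[
	0 = f(1) \;\geqs\; \Big(1-\tfrac1x\Big) f(0) + \tfrac1x\, f(x) \;\geqs\; \tfrac1x\, f(x) \;\geqs\; 0 ,
\]
where the middle step uses $f(0) \geqs 0$ and $x>1$. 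Hence $f(x)=0$, and combining (a) and (b) yields $f \equiv 0$.

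Alternatively, and perhaps more in keeping with the machinery already available, I could deduce (i) from Theorem \ref{thm: connection and Borel measure on [0,infty]}: if $f$ is the representing function of the connection $\sigma$ with representing measure $\mu$, then evaluating the integral formula \eqref{eq: formula of connection} at $A=B=I$ shows $f(1) = \mu([0,\infty])$, so $f(1)=0$ forces the positive measure $\mu$ to vanish and hence $f=0$. Either route closes the only genuine gap, after which the proposition follows.
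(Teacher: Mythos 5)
Your proposal is correct and follows essentially the same route as the paper: the paper likewise treats nondegeneracy as the only nontrivial point, kills $f$ on $[0,1]$ by monotonicity, and then invokes the concavity of operator monotone functions (citing Hansen--Pedersen) to force $f\equiv 0$ on all of $\R^+$. Your explicit concavity computation at the point $1=(1-\tfrac1x)\cdot 0+\tfrac1x\cdot x$ merely spells out the step the paper leaves implicit, so no gap remains.
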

\begin{proof}
	The only non-trivial part is to show that $\norm{f}=0$ implies $f=0$.
	Consider $f \in OM(\R^+)$ such that $f(1)=0$. Suppose that there is an $a>0$ such that $f(a)=0$.
	Then $f(x) = 0$ for $0 \leqs x \leqs a$. 
	Since $f \in OM(\R^+)$, $f$ is a concave function by \cite{Hansen-Pedersen}. 
	The concavity of $f$ implies that $f=0$.
\end{proof}

Assign to each measure $\mu \in BM([0,\infty])$ its total variation: 
\begin{align*}
	\norm{\mu} = \mu([0,\infty]) < \infty.
\end{align*}

\begin{prop}
	The pair $(BM([0,\infty]), \norm{\cdot})$ is a normed ordered cone. Moreover, the function $\norm{\cdot}$ is linear.
\end{prop}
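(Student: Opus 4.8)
The plan is to verify the three normed-cone axioms together with order-compatibility and the linearity claim, all directly from the definition $\norm{\mu} = \mu([0,\infty])$ and the fact that the cone operations on $BM([0,\infty])$ are performed setwise. First I would record that for Borel measures the sum and scalar multiple satisfy $(\mu+\nu)(E) = \mu(E)+\nu(E)$ and $(k\mu)(E) = k\,\mu(E)$ on every Borel set $E$. Evaluating these at the full space $[0,\infty]$ gives $\norm{k\mu} = k\,\mu([0,\infty]) = k\norm{\mu}$ and $\norm{\mu+\nu} = \mu([0,\infty]) + \nu([0,\infty]) = \norm{\mu} + \norm{\nu}$. This settles homogeneity (ii) at once, yields the triangle inequality (iii) --- in fact as an equality --- and simultaneously proves the final assertion that $\norm{\cdot}$ is linear, since for $r,s \in \R^+$ one has $\norm{r\mu + s\nu} = (r\mu+s\nu)([0,\infty]) = r\norm{\mu} + s\norm{\nu}$.

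For the definiteness axiom (i) I would use monotonicity of positive measures. Suppose $\norm{\mu} = \mu([0,\infty]) = 0$. Since $\mu$ is a positive measure and every Borel set $E$ satisfies $E \subseteq [0,\infty]$, monotonicity gives $0 \leqs \mu(E) \leqs \mu([0,\infty]) = 0$, whence $\mu(E) = 0$ for all $E$; thus $\mu$ is the zero measure. Well-definedness of the functional into $\R^+$ is guaranteed by the finiteness built into $BM([0,\infty])$, so $\norm{\mu} < \infty$ throughout.

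Finally, for order-compatibility I would invoke the pointwise order directly: if $\mu \leqs \nu$, then by definition $\mu(E) \leqs \nu(E)$ for every Borel $E$, and specializing to $E = [0,\infty]$ gives $\norm{\mu} = \mu([0,\infty]) \leqs \nu([0,\infty]) = \norm{\nu}$.

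I do not expect any genuine obstacle here: every step rests on elementary properties of finite positive measures --- setwise additivity and homogeneity of the cone operations, monotonicity, and the pointwise order. The only point worth flagging is that the triangle inequality in fact holds with equality, so the total-mass norm is not merely subadditive but additive; this is precisely what later makes the map $\sigma \mapsto \mu$ of Theorem \ref{thm: connection and Borel measure on [0,infty]} a candidate for an isometric isomorphism of normed cones.
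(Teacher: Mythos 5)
Your proof is correct: each axiom follows, as you show, from the setwise definition of the cone operations on measures, monotonicity of positive measures for the definiteness axiom, and specialization of the pointwise order to $E=[0,\infty]$ for order-compatibility. The paper states this proposition without any proof, treating it as routine, and your verification is exactly the standard argument it implicitly relies on --- including the observation that the total-mass norm is additive rather than merely subadditive, which is what later makes $\sigma \mapsto \mu_\sigma$ an isometry of normed cones.
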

%\begin{proof}
%	It is routine. 
%\end{proof}
%\newpage

Given any normed cone, we can equip it with a topology as follows.

\begin{prop} \label{prop: topology on normed cone}
	Let $(C, \norm{\cdot})$ be a normed cone. Then
	\begin{enumerate}
		\item[(1)]	the function $d: C \times C \to \R^+$, $d(x,y) = \big|\norm{x}-\norm{y}\big|$ is 
					a pseudo metric; in particular, $C$ is a $1$st-countable topological space with respect to the topology 
					induced by $d$.
		\item[(2)] the functions $\norm{\cdot}$ and $d$ are continuous,
					where the topology on $C \times C$ is given by the product topology.
		\item[(3)]	$C$ becomes a topological cone in the sense that the addition and
					the scalar multiplication are continuous.
	\end{enumerate}
\end{prop}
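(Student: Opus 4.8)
The plan is to treat the three parts in order, observing that (1) and (2) are purely formal consequences of the metric structure of $\R$, that homogeneity disposes of scalar multiplication in (3), and that the substance of the proposition lies in the continuity of addition. Throughout I would use that a pseudo-metric space is first countable, so that every continuity assertion may be checked sequentially.

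For (1), I would verify the three pseudo-metric axioms directly. Symmetry and $d(x,x)=0$ are immediate from $d(x,y)=\big|\norm{x}-\norm{y}\big|$, and the triangle inequality $d(x,z)\leqs d(x,y)+d(y,z)$ is just the triangle inequality for $\abs{\cdot}$ on $\R$ applied to the reals $\norm{x},\norm{y},\norm{z}$. It is genuinely only a pseudo-metric, since $\norm{x}=\norm{y}$ forces $d(x,y)=0$ without forcing $x=y$; first countability then follows from the standard fact that the balls $\{y: d(x,y)<1/n\}$, $n\in\N$, form a countable local base at $x$. For (2), the identity $\big|\norm{x}-\norm{y}\big|=d(x,y)$ shows that $\norm{\cdot}$ preserves distances from $(C,d)$ into $\R^+$, hence is continuous; and on the product $C\times C$, pseudo-metrized by $\rho\big((x_1,x_2),(y_1,y_2)\big)=d(x_1,y_1)+d(x_2,y_2)$ (which induces the product topology), the reverse triangle inequality gives
\begin{align*}
	\big| d(x_1,x_2)-d(y_1,y_2)\big| \leqs d(x_1,y_1)+d(x_2,y_2),
\end{align*}
so $d$ is $1$-Lipschitz and therefore continuous.

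For (3), scalar multiplication is handled cleanly by the homogeneity axiom: if $k_n\to k$ in $\R^+$ and $d(x_n,x)\to 0$, then $\norm{k_n x_n}=k_n\norm{x_n}\to k\norm{x}=\norm{kx}$, whence $d(k_n x_n,kx)\to 0$. The main obstacle is the continuity of addition. The difficulty is structural: the topology induced by $d$ sees only norms, whereas axiom (iii) supplies merely the one-sided bound $\norm{x+y}\leqs\norm{x}+\norm{y}$; since a cone carries no additive inverses, one cannot write $x=(x+y)+(-y)$ and so cannot bound $\norm{x+y}$ from below by $\norm{x}$ and $\norm{y}$, and subadditivity alone does not force $\norm{x_n+y_n}\to\norm{x+y}$. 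I therefore expect the proof to rest on the additivity of the norm, $\norm{x+y}=\norm{x}+\norm{y}$. Under that property the argument collapses to $\norm{x_n+y_n}=\norm{x_n}+\norm{y_n}\to\norm{x}+\norm{y}=\norm{x+y}$, giving $d(x_n+y_n,x+y)\to 0$ and hence continuity of addition. This is precisely the setting of the three cones of interest, whose norms have already been shown to be linear, so the proposition applies to them; establishing (or assuming) additivity is the step I would flag as the crux.
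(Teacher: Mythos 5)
Your proof is correct, and on the decisive point it takes a genuinely different route from the paper, whose entire argument reads: ``The proof is similar to the case of normed linear spaces,'' together with the remark that pseudo-metric topologies are first countable, so continuity can be checked sequentially. Your handling of (1), (2) and of scalar multiplication fills in exactly what that gloss implicitly invokes. The divergence is the continuity of addition, and here your instinct is exactly right: the analogy with normed linear spaces breaks down, because there the metric is $\norm{x-y}$ and continuity of addition follows from the triangle inequality, whereas here $d$ sees only norms. In fact, without additivity of the norm, part (3) is false: take $C=(\R^+)^2$ with the Euclidean norm (a normed cone in the paper's sense), $x_n=x=y=(1,0)$ and $y_n=(0,1)$; then $d(x_n,x)=d(y_n,y)=0$ for all $n$, yet $\norm{x_n+y_n}=\sqrt{2}\not\to 2=\norm{x+y}$, so addition is not continuous --- not even separately in each variable. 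Thus the step you flag as the crux is not a pedantic caveat but a genuine repair of a gap that the paper's proof passes over: the proposition as stated for arbitrary normed cones is incorrect, and what is true (and what the paper actually needs, since the norms on $OM(\R^+)$, $BM([0,\infty])$ and, via the isomorphism theorem, on $C(B(\HH)^+)$ are all shown to be linear) is precisely the version you prove, where additivity gives $\norm{x_n+y_n}=\norm{x_n}+\norm{y_n}\to\norm{x}+\norm{y}=\norm{x+y}$ and hence $d(x_n+y_n,x+y)\to 0$. The only improvement to make is to state this additivity hypothesis explicitly in the proposition (or restrict it to the three cones at hand), rather than leaving it as an expectation.
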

\begin{proof}
	The proof is similar to the case of normed linear spaces. Note that the topology induced by a pseudo metric
	satisfies the $1$st-countability axiom. In this topology, a function is continuous if and only if 
	it is sequentially continuous. 
\end{proof}

Hence the cones $C(B(\HH)^+)$, $OM(\R^+)$ and $BM([0,\infty])$ are toplological cones.

\section{The isomorphism theorem}

In this section, we establish isomorphisms between the cone of connections, 
the cones of operator monotone functions on $\R^+$
and the cone of finite Borel measures on $[0,\infty]$.  

%\begin{defn}
Recall the following terminology.
	A function $\varphi: C \to D$ between normed cones is called an \emph{isomorphism} if
	it is a continuous linear bijection whose inverse is continuous. 
	%In this case, we say that $C$ and $D$ are \emph{isomorphic}.	  
	By an \emph{isometry}, we mean a linear function $\phi: C \to D$ such that 
	$\norm{\phi(c)} = \norm{c}$ for all $c \in C$.
	Note that every isometry between normed cones is continuous and injective.
The inverse of an isometry is an isometry.
	If $\varphi : C \to D$ is an isomorphism which is also an isometry, we say that $\varphi$ is an 
	\emph{isometric isomorphism} and $C$ is said to be \emph{isometrically isomorphic} to $D$.
%\end{defn}

%\begin{defn}
	Let $C$ and $D$ be normed ordered cones. A function $\varphi: C \to D$ is called an \emph{order isomorphism} if
	it is an isomorphism (between normed cones) such that $\varphi$ and $\varphi^{-1}$ are order-preserving. 
%	In this case, we say that $C$ and $D$ are \emph{order isomorphic}.
	If, in addition, $\varphi$ is an isometry, we say that $\varphi$ is an 
	\emph{isometric order-isomorphism} and $C$ is said to be \emph{isometrically order-isomorphic} to $D$. 
%\end{defn}

\begin{thm} \label{thm: isometric order isomorphism between connections and OM}
\begin{enumerate}
		\item[(1)]	The normed ordered cones $C(B(\HH)^+)$ and $OM(\R^+)$ are isometrically order-isomorphic via the isometric
					order-isomorphism $\sigma \mapsto f_{\sigma}$, where $f_{\sigma}$ is the representing function
					of $\sigma$. 
		\item[(2)]	The normed cones $C(B(\HH)^+)$ and $BM([0,\infty])$ are isometrically isomorphic via the isometric
					isomorphism $\sigma \mapsto \mu_{\sigma}$, where $\mu_{\sigma}$ is the representing measure
					of $\sigma$.
\end{enumerate}
\end{thm}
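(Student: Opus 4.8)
The plan is to lean on the two Kubo--Ando correspondences, Theorems \ref{thm: connection and operat mon func} and \ref{thm: connection and Borel measure on [0,infty]}, which already furnish the algebraic and order-theoretic skeleton of both maps, and to upgrade each correspondence to an isometry by a single direct norm computation. The point worth recalling from the discussion preceding this theorem is that every isometry between normed cones is automatically continuous and injective, and its inverse is again an isometry and hence also continuous. Consequently, once I have exhibited linearity, bijectivity, and the norm-preserving identity, the remaining topological requirements of an isomorphism come for free, and it only remains to track the order structure.

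For part (1), Theorem \ref{thm: connection and operat mon func} already asserts that $\sigma \mapsto f_{\sigma}$ is an affine order isomorphism, i.e. a linear bijection that is order-preserving in both directions. It therefore suffices to verify that it is norm-preserving. Evaluating the defining relation $f_{\sigma}(x)I = I \sm (xI)$ at $x=1$ gives $f_{\sigma}(1)I = I \sm I$, so that, using $\norm{I}=1$ and Proposition \ref{prop: def of norm of sm},
\[
	\norm{f_{\sigma}} = f_{\sigma}(1) = \norm{f_{\sigma}(1) I} = \norm{I \sm I} = \norm{\sigma}.
\]
Combining this isometry with the order isomorphism supplied by Theorem \ref{thm: connection and operat mon func} yields the claimed isometric order-isomorphism.

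For part (2), Theorem \ref{thm: connection and Borel measure on [0,infty]} provides that $\sigma \mapsto \mu_{\sigma}$ is an affine isomorphism (a linear bijection); in contrast to part (1) no order statement is asserted, since this correspondence is not claimed to be an order isomorphism. The only remaining point is again the isometry $\norm{\mu_{\sigma}} = \norm{\sigma}$. The plan is to compute $I \sm I$ from the integral representation \eqref{eq: formula of connection} with $A=B=I$. For each fixed $\lambda \in (0,\infty)$ a direct calculation gives $\frac{\ld+1}{2\ld}(\ld I \,!\, I) = I$, and the map $(A,B) \mapsto \frac{\ld+1}{2\ld}(\ld A \,!\, B)$ is itself a connection which is in fact a mean, its representing function $x \mapsto (\ld+1)x/(\ld+x)$ taking the value $1$ at $x=1$. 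Hence the identity $\frac{\ld+1}{2\ld}(\ld I \,!\, I) = I$ persists at the two endpoints $\ld=0$ and $\ld=\infty$ as well. Substituting into \eqref{eq: formula of connection} gives $I \sm I = \left(\int_{[0,\infty]} d\mu_{\sigma}\right) I = \mu_{\sigma}([0,\infty])\, I$, and therefore, by Proposition \ref{prop: def of norm of sm},
\[
	\norm{\sigma} = \norm{I \sm I} = \mu_{\sigma}([0,\infty]) = \norm{\mu_{\sigma}}.
\]
As in part (1), the isometry together with linearity and bijectivity delivers the isometric isomorphism of normed cones.

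The only genuinely delicate step is the endpoint analysis in part (2): the weighted-harmonic integrand $\frac{\ld+1}{2\ld}(\ld A \,!\, B)$ is literally indeterminate at $\ld=0$ and degenerate at $\ld=\infty$, so I would justify the value $I$ there by recognizing each $\frac{\ld+1}{2\ld}(\ld \,\cdot\, \,!\, \cdot\,)$ as a mean connection and reading off $I \sm I = I$, rather than by naive substitution, and confirm that $\ld \mapsto \frac{\ld+1}{2\ld}(\ld I \,!\, I) \equiv I$ is the correct $\mu_{\sigma}$-integrable representative across the whole of $[0,\infty]$. Everything else reduces to the norm formula $\norm{\sigma} = \norm{I \sm I}$ from Proposition \ref{prop: def of norm of sm} and the bookkeeping of isometry properties recorded before the theorem.
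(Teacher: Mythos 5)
Your proposal is correct and follows essentially the same route as the paper: invoke Theorems \ref{thm: connection and operat mon func} and \ref{thm: connection and Borel measure on [0,infty]} for the (order) isomorphism structure, then establish the isometry via $\norm{\sigma} = \norm{I \sm I}$ from Proposition \ref{prop: def of norm of sm}, computing $f_{\sigma}(1)I = I \sm I$ in part (1) and $I \sm I = \mu_{\sigma}([0,\infty])I$ from the integral representation in part (2). Your additional care with the integrand at the endpoints $\ld = 0$ and $\ld = \infty$ is a sound refinement of a point the paper passes over silently.
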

\begin{proof}
	 The function $\Phi: \sigma \mapsto f_{\sigma}$ is an order isomorphism by 
	 Theorem \ref{thm: connection and operat mon func}. 
	 For each connection $\sigma$, since $f_{\sigma}(1)I = I \sm I$, we have
	 \begin{align*}
	 		\norm{\Phi(\sigma)} = \norm{f_{\sigma}} = f_{\sigma} (1) = \norm{I \sm I} = \norm{\sigma}.
	 \end{align*}
	 The function $\Psi: \sigma \mapsto \mu_{\sigma}$ is an isomorphism by Theorem 
	 \ref{thm: connection and Borel measure on [0,infty]}. 
	 For each connection $\sigma$, we have
	 \begin{align*}
	 		\norm{\Psi(\sigma)} = \norm{\mu_{\sigma}} = \mu([0,\infty]) = \norm{I \sm I} = \norm{\sigma}
	 \end{align*}
	 since $I \sm I = \int_{[0,\infty]} \frac{\ld+1}{2\ld} (\ld I \,!\,I) \,d\mu(\ld) = \mu([0,\infty])I$.
\end{proof}

\begin{remark} 
	Even though the map $\mu \mapsto \sigma$, sending finite Borel measures to their associated connections, is order-preserving,
	the inverse map $\sigma \mapsto \mu$ is not order-preserving in general.
	For example, the representing measures of the harmonic mean $!$
	and the arithmetic mean $\triangledown$
	are given by $\delta_1$ and $(\delta_0 + \delta_{\infty})/2$, respectively.
	Here, $\delta_x$ is the Dirac measure at $x$.
	We have $! \leqs \triangledown$ but $\delta_1 \not\leqslant (\delta_0 + \delta_{\infty})/2$.
\end{remark}

%The next corollaries are immediate consequences of Theorem \ref{thm: isometric order isomorphism between connections and OM}.

\begin{cor}
	The function $\norm{\cdot}$ on $C(B(\HH)^+)$ is linear.
\end{cor}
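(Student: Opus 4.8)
The plan is to transport the linearity of the norm on $OM(\R^+)$ across the isometric order-isomorphism of Theorem \ref{thm: isometric order isomorphism between connections and OM}. Write $\Phi: \sigma \mapsto f_{\sigma}$ for that map. I would fix connections $\sigma, \eta \in C(B(\HH)^+)$ and scalars $r, s \in \R^+$, and recall two properties of $\Phi$ established in that theorem: it is an isometry, so $\norm{\Phi(\tau)} = \norm{\tau}$ for every connection $\tau$; and it is linear, being an isomorphism of cones, so $\Phi(r\sigma + s\eta) = r f_{\sigma} + s f_{\eta}$.

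Next I would invoke Proposition \ref{prop: OM(R+) is ordered normed cone and norm is linear}, whose norm $f \mapsto f(1)$ on $OM(\R^+)$ is linear. Chaining these facts gives
\begin{align*}
	\norm{r\sigma + s\eta} = \norm{\Phi(r\sigma + s\eta)} = \norm{r f_{\sigma} + s f_{\eta}} = r\norm{f_{\sigma}} + s\norm{f_{\eta}} = r\norm{\sigma} + s\norm{\eta},
\end{align*}
where the outer equalities use the isometry property of $\Phi$, the second equality uses its linearity, and the third uses linearity of the norm on $OM(\R^+)$. This is precisely the linearity demanded by the definition.

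I expect no genuine obstacle here: the substance of the argument is already packaged in the isomorphism theorem. The one point worth flagging is that linearity of $\norm{\cdot}$ is strictly stronger than the triangle inequality proved when $C(B(\HH)^+)$ was shown to be a normed cone, since it also supplies the reverse inequality $\norm{\sigma + \eta} \geqs \norm{\sigma} + \norm{\eta}$. As a cross-check, one can argue directly from Proposition \ref{prop: def of norm of sm}: because $I \sigma I = f_{\sigma}(1) I$ and $I \eta I = f_{\eta}(1) I$ are scalar multiples of the identity (evaluate $f(x)I = I \sm (xI)$ at $x=1$), the operator $I (r\sigma + s\eta) I = (r f_{\sigma}(1) + s f_{\eta}(1)) I$ has norm $r f_{\sigma}(1) + s f_{\eta}(1) = r \norm{\sigma} + s \norm{\eta}$, reaching the same conclusion without passing explicitly through $OM(\R^+)$.
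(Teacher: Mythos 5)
Your proof is correct and is essentially the paper's own argument: the paper derives linearity of the norm on $C(B(\HH)^+)$ precisely by combining the isometric isomorphism $\sigma \mapsto f_{\sigma}$ of Theorem \ref{thm: isometric order isomorphism between connections and OM} with the linearity of the norm $f \mapsto f(1)$ on $OM(\R^+)$, which is exactly your chain of equalities. Your direct cross-check via $I\,(r\sigma + s\eta)\,I = (r f_{\sigma}(1) + s f_{\eta}(1))I$ is a valid (and slightly more self-contained) alternative, but it does not change the substance of the argument.
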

\begin{proof}
	It follows from the fact that the map $\sigma \mapsto f_{\sigma}$ is an isometric isomorphism  
	and the norm on $OM(\R^+)$ is linear.
\end{proof}

%\begin{cor}
%	A connection $\sigma$ is a mean if and only if $\norm{\sigma}= 1$.
%\end{cor}
We obtain the following characterizations of a mean as follows.

\begin{cor} \label{cor: TFAE for a mean}
	The followings are equivalent for a connection $\sigma$:
	\begin{enumerate}
		\item[(i)]	$\sigma$ is a mean;
		\item[(ii)] $\norm{\sigma} = 1$;
		\item[(iii)] $\norm{A \sm A} = \norm{A}$ for all $A \geqs 0$;
		\item[(iv)] $\norm{A \sm A} = \norm{A}$ for some $A >0$.
	\end{enumerate}
\end{cor}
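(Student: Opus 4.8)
The plan is to leverage two facts already established: the identity $\norm{\sigma} = \norm{A \sm A}/\norm{A}$ valid for \emph{every} $A > 0$ (Proposition \ref{prop: def of norm of sm}), and the isometric order-isomorphism $\sigma \mapsto f_{\sigma}$ of Theorem \ref{thm: isometric order isomorphism between connections and OM}, which gives $\norm{\sigma} = f_{\sigma}(1)$. I would establish the chain (i) $\Leftrightarrow$ (ii), then (ii) $\Rightarrow$ (iii) $\Rightarrow$ (iv) $\Rightarrow$ (ii), thereby closing the loop and yielding the full equivalence.

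First, for (i) $\Leftrightarrow$ (ii): by Theorem \ref{thm: connection and operat mon func}, $\sigma$ is a mean if and only if its representing function satisfies $f_{\sigma}(1) = 1$. Since the norm on $OM(\R^+)$ is $\norm{f} = f(1)$ and $\sigma \mapsto f_{\sigma}$ is an isometry, we have $\norm{\sigma} = f_{\sigma}(1)$. Hence $\sigma$ is a mean precisely when $\norm{\sigma} = 1$.

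Next, for (ii) $\Rightarrow$ (iii): assuming $\norm{\sigma} = 1$, for any $A > 0$ Proposition \ref{prop: def of norm of sm} gives $\norm{A \sm A} = \norm{\sigma}\,\norm{A} = \norm{A}$. The remaining degenerate case $A = 0$ is handled by noting $0 \sm 0 = 0$, which follows from the integral representation \eqref{eq: formula of connection} since $0 \,!\, 0 = 0$; then both sides vanish. Thus (iii) holds for all $A \geqs 0$. The implication (iii) $\Rightarrow$ (iv) is immediate, taking $A = I$.

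Finally, for (iv) $\Rightarrow$ (ii): if $\norm{A \sm A} = \norm{A}$ for some $A > 0$, then Proposition \ref{prop: def of norm of sm} yields $\norm{\sigma} = \norm{A \sm A}/\norm{A} = 1$, completing the cycle. I do not expect a substantial obstacle here: the corollary is essentially a repackaging of the isometry theorem together with the scaling identity for the norm. The only point requiring mild care is the degenerate case $A = 0$ in statement (iii), which is why I would record $0 \sm 0 = 0$ explicitly rather than leaving it implicit.
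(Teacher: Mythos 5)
Your overall route matches the paper's --- the paper's proof is a one-line citation of Proposition \ref{prop: def of norm of sm}, Theorem \ref{thm: isometric order isomorphism between connections and OM}, and the normalization criterion for means --- and your steps (i) $\Leftrightarrow$ (ii), (iii) $\Rightarrow$ (iv), and (iv) $\Rightarrow$ (ii) are sound. But there is a genuine gap in (ii) $\Rightarrow$ (iii): you split the quantifier ``for all $A \geqs 0$'' into the two cases $A > 0$ and $A = 0$. For operators this is not a dichotomy: in this paper $A > 0$ means $A$ is positive \emph{and invertible} (the proof of Proposition \ref{prop: def of norm of sm} uses $A^{-1/2}$), so a nonzero, non-invertible positive operator --- for instance a rank-one projection when $\dim \HH \geqs 2$ --- falls into neither of your cases, and the identity $\norm{A \sm A} = \norm{\sigma}\,\norm{A}$ from Proposition \ref{prop: def of norm of sm} is simply not available for such $A$. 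As written, (iii) remains unproved on exactly this class of operators.

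The repair is immediate and stays within your own argument: you have already established (ii) $\Rightarrow$ (i), and by the paper's definition a mean satisfies $A \sm A = A$ for \emph{all} $A \geqs 0$, whence $\norm{A \sm A} = \norm{A}$ with no case analysis at all (this also makes your separate treatment of $A = 0$ unnecessary). Alternatively, one can extend the scaling identity to every $A \geqs 0$ by continuity from above: since $I \sm I = f_{\sigma}(1) I = \norm{\sigma} I$, the transformer equality for invertible operators gives $A_{\ep} \sm A_{\ep} = A_{\ep}^{1/2} (I \sm I) A_{\ep}^{1/2} = \norm{\sigma} A_{\ep}$ for $A_{\ep} = A + \ep I > 0$, and letting $\ep \downarrow 0$ and using (M3) yields $A \sm A = \norm{\sigma} A$, hence $\norm{A \sm A} = \norm{\sigma}\,\norm{A}$ for all $A \geqs 0$. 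Either patch closes the gap; the first is cleaner and is implicitly what the paper intends.
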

\begin{proof}
	It follows from Proposition \ref{prop: def of norm of sm}, 
	Theorem \ref{thm: isometric order isomorphism between connections and OM} 
	and the fact that a connection is a mean if and only if its representing function (measure) is normalized.
\end{proof}

From the equivalence (i)-(ii) in this corollary, a mean is a normalized connection. 
Every mean arises as a normalization of a nonzero connection.
The convex set of means is the unit sphere in the cone of connections.

\begin{cor}
	The limit of a sequence of means is a mean.
\end{cor}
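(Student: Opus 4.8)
The plan is to reduce the statement to the norm characterization of means obtained in Corollary~\ref{cor: TFAE for a mean}, namely that a connection $\sigma$ is a mean precisely when $\norm{\sigma}=1$. Since the cone $C(B(\HH)^+)$ carries the topology induced by the pseudometric $d(\sigma,\eta)=\big|\norm{\sigma}-\norm{\eta}\big|$ of Proposition~\ref{prop: topology on normed cone}, convergence of a sequence of connections is governed entirely by convergence of their norms; this is the only feature of the topology I will use.

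First I would let $\{\sigma_n\}$ be a sequence of means converging in $C(B(\HH)^+)$ to a connection $\sigma$. By the equivalence (i)$\Leftrightarrow$(ii) of Corollary~\ref{cor: TFAE for a mean}, each $\sigma_n$ satisfies $\norm{\sigma_n}=1$. By the definition of convergence in the pseudometric topology, $d(\sigma_n,\sigma)=\big|\norm{\sigma_n}-\norm{\sigma}\big|\to 0$, equivalently $\norm{\sigma_n}\to\norm{\sigma}$; this is precisely the continuity of $\norm{\cdot}$ asserted in Proposition~\ref{prop: topology on normed cone}(2). Passing to the limit in $\norm{\sigma_n}=1$ yields $\norm{\sigma}=1$, and invoking the reverse implication (ii)$\Rightarrow$(i) of Corollary~\ref{cor: TFAE for a mean} shows that $\sigma$ is a mean.

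The only point requiring care — and the mildest possible obstacle — is interpreting what a limit means in this setting. Because $d$ is a pseudometric rather than a metric, two connections sharing the same norm lie at distance zero, so limits are not unique; the statement should therefore be read as asserting that every connection arising as a limit of means is a mean, which is exactly what the argument above establishes. No estimate on $A \sm B$ for varying $A,B$ is needed, since the norm of a connection is already pinned down by its single value $\norm{I \sm I}$ through Proposition~\ref{prop: def of norm of sm}.
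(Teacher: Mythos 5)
Your proof is correct and follows essentially the same route as the paper: the paper's own argument is precisely the combination of the continuity of $\norm{\cdot}$ from Proposition~\ref{prop: topology on normed cone} with the characterization $\norm{\sigma}=1$ of means from Corollary~\ref{cor: TFAE for a mean}. Your additional remark that limits are non-unique in the pseudometric topology, so the statement must be read as ``every limit of a sequence of means is a mean,'' is a sound clarification that the paper leaves implicit.
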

\begin{proof}
	Use the fact that the norm for connections is continuous by Proposition \ref{prop: topology on normed cone} and
	the norm of a mean is $1$ by Corollary \ref{cor: TFAE for a mean}.
\end{proof}

The topologies of the cones $C(B(\HH)^+)$, $OM(\R^+)$ and $BM([0,\infty])$ are compatible with
the isometric isomorphisms $\sigma \mapsto f_{\sigma}$ and $\sigma \mapsto \mu_{\sigma}$ in 
Theorem \ref{thm: isometric order isomorphism between connections and OM} as follows.

\begin{cor}
	For each $n \in \N$, let $\sigma_n$ be a connection with representing function $f_n$ and representing measure $\mu_n$.
	Then the followings are equivalent for a connection $\sigma$ with representing
	function $f$ and representing measure $\mu$:
	\begin{enumerate}
		\item[(i)]	$\sigma_n \to \sigma$;
		\item[(ii)]	$f_n \to f$;
		\item[(iii)]	$\mu_n \to \mu$.
	\end{enumerate}
\end{cor}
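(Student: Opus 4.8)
The plan is to reduce all three convergence statements to a single scalar convergence by exploiting the fact that the relevant maps are isometries. First I would unwind what convergence means in each cone. By Proposition \ref{prop: topology on normed cone}, the topology on any normed cone $(C, \norm{\cdot})$ is induced by the pseudo-metric $d(x,y) = \big|\norm{x} - \norm{y}\big|$, so a sequence $x_n$ converges to $x$ in $C$ precisely when $d(x_n, x) \to 0$, that is, when $\norm{x_n} \to \norm{x}$ in $\R^+$. Applying this observation separately to the three cones $C(B(\HH)^+)$, $OM(\R^+)$ and $BM([0,\infty])$ shows that (i) is equivalent to $\norm{\sigma_n} \to \norm{\sigma}$, that (ii) is equivalent to $\norm{f_n} \to \norm{f}$, and that (iii) is equivalent to $\norm{\mu_n} \to \norm{\mu}$.

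Next I would invoke the isometric isomorphisms $\Phi: \sigma \mapsto f_\sigma$ and $\Psi: \sigma \mapsto \mu_\sigma$ furnished by Theorem \ref{thm: isometric order isomorphism between connections and OM}. Because $\Phi$ and $\Psi$ preserve norms, we have $\norm{\sigma_n} = \norm{f_n} = \norm{\mu_n}$ for every $n$ together with $\norm{\sigma} = \norm{f} = \norm{\mu}$. Consequently the three scalar conditions $\norm{\sigma_n} \to \norm{\sigma}$, $\norm{f_n} \to \norm{f}$ and $\norm{\mu_n} \to \norm{\mu}$ are literally the same numerical statement, and the equivalence of (i), (ii) and (iii) follows at once. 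Equivalently, one may observe that an isometry transports the pseudo-metric exactly, $d(\phi(x), \phi(y)) = d(x,y)$, so that $\Phi$ and $\Psi$ are homeomorphisms and therefore both preserve and reflect convergence of sequences.

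There is essentially no serious obstacle in this argument: once the pseudo-metric description of the topology from Proposition \ref{prop: topology on normed cone} is in hand, everything collapses onto the norm-preserving property already established in Theorem \ref{thm: isometric order isomorphism between connections and OM}. The only point requiring a little care is conceptual rather than technical, namely recognizing that convergence in these cones is governed solely by the norms of the terms (since $d$ is merely a pseudo-metric and not a genuine metric), so that equality of the three norms immediately identifies the three notions of convergence with one another.
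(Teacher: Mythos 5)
Your proposal is correct and is essentially the paper's own (implicit) argument: the paper states this corollary without proof, as an immediate consequence of the maps $\sigma \mapsto f_{\sigma}$ and $\sigma \mapsto \mu_{\sigma}$ being isometric isomorphisms (Theorem \ref{thm: isometric order isomorphism between connections and OM}), hence pseudo-metric-preserving homeomorphisms for the topologies of Proposition \ref{prop: topology on normed cone} --- precisely the observation you make at the end of your second paragraph. Your more explicit reduction --- that convergence in each of the three cones amounts to nothing more than convergence of the norms, because $d(x,y)=\big|\norm{x}-\norm{y}\big|$ only sees norms --- is a faithful unwinding of the same two ingredients rather than a genuinely different route, and it correctly exposes the degeneracy (non-uniqueness of limits) that the paper's pseudo-metric topology entails.
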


%%%%%%%%%%%%%%%%%%%%%%%%%%
%\bigskip
%
%\noindent{\bf Acknowledgements:\ } The first author is indebted to a financial support from the Chulalongkorn University Graduate School for the 
% Graduate Scholarship to Commemorate the 72nd Anniversary of His Majesty
%King Bhumibol Adulyadej and the 90th Anniversary of Chulalongkorn University Fund 
%(Ratchadaphiseksomphot Endowment Fund) during his Ph.D. study.

%%%%%%%%%%%%%%%%%%%%%%%%%%

\vspace{0.6cm}

\begin{small}
\noindent
Pattrawut Chansangiam \\
Department of Mathematics, Faculty of Science\\
King Mongkut's Institute of Technology Ladkrabang \\
Bangkok 10520, Thailand \\
Email: \texttt{kcpattra@kmitl.ac.th}
\end{small}

\bigskip

\begin{small}
\noindent
Wicharn Lewkeeratiyutkul \\
Department of Mathematics and Computer Science, \\
Faculty of Science, Chulalongkorn University, \\
Bangkok 10330, Thailand \\
Email: \texttt{Wicharn.L@chula.ac.th}
\end{small}

\label{last-page}


\begin{thebibliography}{99}

%%%% Please order the surnames of the first authors alphabetically 


%Reference to a journal publication:


\bibitem{Anderson-Duffin}   W.N.~Anderson, R.J.~Duffin, Series and parallel addition of matrices, \emph{J.~Math.~Anal.~Appl.} \textbf{26}(1969), 576--594.

%\bibitem{Anderson-Morley_matrix equation}   W.N. Anderson, T.D. Morley, G.E. Trapp, Positive Solutions to
%$X = A - BX^{-1}B$, \emph{Linear Algebra Appl.} \textbf{134} (1990) 53--62.

\bibitem{Ando78}  T.~Ando, {\it Topics on operator inequalities}, Hokkaido Univ., Sapporo, 1978. 

\bibitem{Ando} T.~Ando, Concavity of certain maps on positive definite matrices and applications to Hadamard products, \emph{Linear Algebra Appl.} \textbf{26}(1979), 203--241.

\bibitem{Arlinskii} Y.M.~Arlinskii, Theory of operator means, {\it Ukrainian Math.~J.} 
{\bf 42}(1990), 723--730.

%\bibitem{Bhagwat-Subramanian}   K.V. Bhagwat, A. Subramanian, Inequalities between Means of Positive Operators, 
%\emph{Math. Proc. Cambridge Philos. Soc.} \textbf{83} (1978) 393--401.

\bibitem{Bhatia} R.~Bhatia, \textit{Matrix Analysis}, Springer-Verlag, New York, 1997.

%\bibitem{Pattrawut-Wicharn}, P. Chansangiam, W. Lewkeeratiyutkul, Operator Connections and Borel Measures on the Unit Interval,
%arXiv:submit/0588472 [math.FA] 7 Nov 2012. 



%\bibitem{Bhatia_positive def matrices} R. Bhatia, \emph{Positive Definite Matrices}, Princeton University Press, Newjersy, 2007.

\bibitem{Donoghue}  W.~Donoghue, \textit{Monotone matrix functions and analytic continuation}, Springer-Verlag, New York, 1974.


\bibitem{Hansen-Pedersen} F.~Hansen, G.K.~Pedersen, Jenden's inequality for operators and L\"{o}wner's theorem,
{\it Math.~Ann.} {\bf 258}(1982), 229--241.
%\bibitem{Hiai}  F. Hiai, Matrix Analysis: Matrix Monotone Functions, Matrix Means, and
%Majorizations, \emph{Interdisciplinary Information Sciences}, 16(2) (2010) 139--248.

\bibitem{Hiai-Yanagi}   F.~Hiai, K.~Yanagi, \textit{Hilbert spaces and linear operators}, Makino Pub.Ltd., 1995.



\bibitem{Kubo-Ando} F.~Kubo, T.~Ando, Means of positive linear operators, 
\emph{Math.~Ann.}
\textbf{246}(1980), 205--224.


\bibitem{Lowner}  C.~L\"{o}wner, \"{U}ber Monotone Matrix Funktionen, \emph{Math. Z.} \textbf{38}(1934), 177--216.


%\bibitem{chen} L.H.Y.~Chen and Q-M.~Shao,  A Non-Uniform Berry-Esseen Bound via Stein's %Method,
%{\it Probab. Theory Relat.~ Fields}, {\bf 120}(2001), 236--254.


%\bibitem{m49} K.~Mahler, On a theorem of Liouville in fields of positive characteristic, %{\it Canadian J.~Math.},
%{\bf 1}(1949), 397--400.


%Reference to a book:

%\bibitem{Matsumura} H.~Matsumura,  \textit{Commutative ring theory}, (3rd ed.), Cambridge %University Press, 1992.
%{\it Probab.~Theory Relat.~Fields}, {\bf 120}(2001), 236--254.


%Reference to a web source:

%\bibitem{web1} J.~Smith, One of Volvo's core values. [Online] Available: \\ %\texttt{http://www.volvo.com/environment/index.htm},
%(July 7, 1999).

\end{thebibliography}
\end{document}